\newtheorem{remark}{Remark}
\newtheorem{theorem}{Theorem}
\newtheorem{corollary}{Corollary}
\newtheorem{lemma}{Lemma}
\newcommand{\C}{\mathbb{C}}
\newcommand{\cA}{\mathcal{A}}
\newcommand{\cL}{\mathcal{L}}
\newcommand{\cQ}{\mathcal{Q}}
\begin{document}

\title{Unified spectral bounds on the chromatic number}

\author{
Clive Elphick\thanks{\texttt{clive.elphick@gmail.com}}
\and
Pawel Wocjan\thanks{Department of Electrical Engineering and Computer Science, University of Central Florida, Orlando, USA; \texttt{wocjan@eecs.ucf.edu}}
}

\date{October 29, 2014}

\maketitle

%97K30, 97H60.

\begin{abstract}
One of the best known results in spectral graph theory is the following lower bound on the chromatic number due to Alan Hoffman, where $\mu_1$ and $\mu_n$ are respectively the maximum and minimum eigenvalues of the adjacency matrix: $\chi \ge 1 + \mu_1/-\mu_n$. We recently generalised this bound to include all eigenvalues of the adjacency matrix.
 
In this paper, we further generalize these results to include all eigenvalues of the adjacency, Laplacian and signless Laplacian matrices. The various known bounds are also unified by considering the normalized adjacency matrix, and examples are cited for which the new bounds outperform known bounds.
\end{abstract}

%%%%%%%%%%%%%%%%%%%%%%%%%%%%%%%%%%%%%%%%%%%%%%%%%%%%%%%%%%%%%

\section{Introduction}

We introduce some notation to state the lower bounds on the chromatic number.
Let $G$ be a graph with $n$ vertices, $m$ edges, chromatic number $\chi$ and adjacency matrix $A$.  Let $D$ be the diagonal matrix of vertex degrees.  Let $L=D-A$ denote the Laplacian of $G$ and $Q=D+A$ denote the signless Laplacian of $G$.
The eigenvalues of $A$ are denoted by $\mu_1\ge\ldots\ge \mu_n$; of $L$ by $\theta_1 \ge \ldots \ge \theta_n =0$; of $Q$ by $\delta_1\ge\ldots\ge\delta_n\ge 0$.  It is known that for all graphs $\delta_i \ge 2 \mu_i$ holds for $i=1,\ldots,n$.

\begin{theorem}[Lower bounds on the chromatic number]\label{thm:bounds}
The chromatic number is bounded from below by:
\begin{eqnarray}
\chi & \ge & 1 + \frac{\mu_1}{-\mu_n} \label{eq:Hoffman} \\
\chi & \ge & 1 + \frac{\mu_1}{\theta_1 - \mu_1} \label{eq:Niki} \\
\chi & \ge & 1 + \frac{\mu_1}{\mu_1 - \delta_1 + \theta_1} \label{eq:Kolo1} \\
\chi & \ge & 1 + \frac{\mu_1}{\mu_1 - \delta_n + \theta_n} \label{eq:Kolo2}
\end{eqnarray}
The bound (\ref{eq:Hoffman}) was proved by Hoffman \cite{Hoffman70} in 1970, the bound (\ref{eq:Niki}) by Nikiforov \cite{Nikiforov07} in 2007, and the bounds (\ref{eq:Kolo1}) and (\ref{eq:Kolo2}) by Kolotilina \cite{Kolotilina} in 2010.  Observe that $\theta_n=0$ is included in (\ref{eq:Kolo2}) on purpose because the generalization of this bound makes use of the eigenvalues of $L$.
\end{theorem}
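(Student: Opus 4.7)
The plan is to derive all four bounds as direct substitutions into Corollary \ref{cor}, which reads
\[
\chi \ge 1 + \frac{\mu_1}{\mu_1 - \lambda_{\max}(B+A) + \lambda_{\max}(B-A)}.
\]
It suffices to pick the real diagonal matrix $B$ so that both $B + A$ and $B - A$ become standard matrices associated with $G$ whose extremal eigenvalues carry named symbols.

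Three choices of $B$ cover three of the bounds. First I would take $B = 0$: then $\lambda_{\max}(B+A) = \mu_1$ and $\lambda_{\max}(B-A) = -\mu_n$, reproducing Hoffman's bound (\ref{eq:Hoffman}). Next I would take $B = D$, the diagonal degree matrix: then $B + A = Q$ and $B - A = L$, whose largest eigenvalues are $\delta_1$ and $\theta_1$, reproducing Kolotilina's first bound (\ref{eq:Kolo1}). Taking $B = -D$ instead gives $B + A = -L$ and $B - A = -Q$, whose largest eigenvalues are $-\theta_n = 0$ and $-\delta_n$, reproducing Kolotilina's second bound (\ref{eq:Kolo2}).

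Only Nikiforov's bound (\ref{eq:Niki}) needs a small extra observation. The quickest route is to notice that the recalled inequality $\delta_1 \ge 2\mu_1$ forces the denominator in (\ref{eq:Kolo1}) to satisfy $\mu_1 - \delta_1 + \theta_1 \le \theta_1 - \mu_1$, so (\ref{eq:Kolo1}) already implies (\ref{eq:Niki}). A self-contained alternative feeds $B = D$ directly into Theorem \ref{thm:Niki}: using the positive semidefiniteness of $L$ in the identity $D + \frac{1}{c-1}A = L + \frac{c}{c-1}A$ gives $\theta_1 \ge \frac{c}{c-1}\mu_1$, which rearranges to (\ref{eq:Niki}). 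I expect no substantive obstacle: once Corollary \ref{cor} is available, each bound is a one-line substitution, and the only real modelling question is which $B$ brings each named matrix pair into view. The choice $B = -D$ for (\ref{eq:Kolo2}) is marginally less obvious than the other two, but is immediate once one searches for $Q$ and $L$ (up to sign) among the matrices of the form $\pm D \pm A$.
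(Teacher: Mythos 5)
Your proposal is correct and follows essentially the same route as the paper: substitute $B=0$ and $B=\pm D$ into Corollary~\ref{cor} to get (\ref{eq:Hoffman}), (\ref{eq:Kolo1}) and (\ref{eq:Kolo2}), and deduce (\ref{eq:Niki}) from (\ref{eq:Kolo1}) via $\delta_1\ge 2\mu_1$. The only cosmetic differences are that the paper obtains Hoffman's bound directly from Theorem~\ref{thm:Niki} rather than from the corollary (the two coincide when $B=0$), and your optional direct derivation of (\ref{eq:Niki}) from $\theta_1\ge\frac{c}{c-1}\mu_1$ is a valid extra that the paper does not include.
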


The purpose of this paper is to prove the following multi-eigenvalue generalization of the above lower bounds.

\begin{theorem}[Generalized lower bounds on the chromatic number]\label{thm:genBounds}
The chromatic number is bounded from below by:
\begin{eqnarray}
\chi & \ge & 1 + \frac{\sum_{i=1}^m \mu_i}{-\sum_{i=1}^m \mu_{n+1-i}} \label{eq:genHof} \\
\chi & \ge & 1 + \frac{\sum_{i=1}^m \mu_i}{\sum_{i=1}^m (\theta_i - \mu_i)} \label{eq:genNiki} \\
\chi & \ge & 1 + \frac{\sum_{i=1}^m \mu_i}{\sum_{i=1}^m (\mu_i - \delta_i + \theta_i)} \label{eq:genKolo1} \\
\chi & \ge & 1 + \frac{\sum_{i=1}^m \mu_i}{\sum_{i=1}^m (\mu_i - \delta_{n+1-i} + \theta_{n+1-i})} \label{eq:genKolo2}
\end{eqnarray}
for all $m=1,\ldots,n$.
Bound (\ref{eq:genHof}) was proved by Wocjan and Elphick \cite{WE} in 2013. The other bounds are new.
\end{theorem}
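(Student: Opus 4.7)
The plan is to deduce each of the four generalized bounds by applying the second inequality of Theorem~\ref{thm:majThm} with the same choices of diagonal matrix $B$ that were used to recover Theorem~\ref{thm:bounds} in the $m=1$ case, and then solving for $c=\chi$. The starting point in every case is
\[ \sum_{i=1}^m \lambda_i^\downarrow(B-A) \;\ge\; \sum_{i=1}^m \lambda_i^\downarrow(B+A) \;-\; \frac{c-2}{c-1}\sum_{i=1}^m \lambda_i^\downarrow(A), \]
into which I substitute $B$ and simplify both sides into partial sums of $\mu_i$, $\theta_i$, and $\delta_i$.

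For the generalized Hoffman bound (\ref{eq:genHof}) I take $B=0$ and use the sign-flip identity $\lambda_i^\downarrow(-A)=-\mu_{n+1-i}$: the left side becomes $-\sum_{i=1}^m \mu_{n+1-i}$ and the right side collapses to $\frac{1}{c-1}\sum_{i=1}^m \mu_i$, and isolating $c$ yields the claim. For (\ref{eq:genKolo1}) I take $B=D$, so that $B-A=L$ and $B+A=Q$, producing $\sum_{i=1}^m \theta_i \ge \sum_{i=1}^m \delta_i - \frac{c-2}{c-1}\sum_{i=1}^m \mu_i$, which rearranges directly to the stated bound. For (\ref{eq:genKolo2}) I take $B=-D$, so that $B-A=-Q$ and $B+A=-L$; applying the sign-flip identity to both sides turns the inequality into $-\sum_{i=1}^m \delta_{n+1-i} \ge -\sum_{i=1}^m \theta_{n+1-i} - \frac{c-2}{c-1}\sum_{i=1}^m \mu_i$, which solves to the claim.

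The generalized Nikiforov bound (\ref{eq:genNiki}) is then obtained from (\ref{eq:genKolo1}): summing the coordinatewise inequality $\delta_i \ge 2\mu_i$ recorded in the excerpt gives $\sum_{i=1}^m (\mu_i - \delta_i + \theta_i) \le \sum_{i=1}^m (\theta_i - \mu_i)$, so enlarging the denominator in $c-1 \ge \sum \mu_i / \sum(\mu_i - \delta_i + \theta_i)$ produces the (weaker) bound (\ref{eq:genNiki}). Along the way one needs to check that the relevant denominators are strictly positive so that inverting the inequality to isolate $c$ is legitimate; this follows either from the Ky Fan characterization of $\sum_{i=1}^m \lambda_i^\downarrow$ or, more directly, by noting that each denominator can be written as $\sum_{i=1}^m \lambda_i^\downarrow(B-A) - \frac{1}{c-1}\sum_{i=1}^m \lambda_i^\downarrow(A)$ for the appropriate $B$.

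No step is particularly delicate: the whole argument is a fourfold repetition of the algebraic rearrangement already performed in the proof of Corollary~\ref{cor}, with the only extra care being the ordering reversal $\lambda_i^\downarrow(-X) = -\lambda_{n+1-i}^\downarrow(X)$ used in the $B=0$ and $B=-D$ cases. Since the substantive majorization content has already been captured in Theorem~\ref{thm:majThm}, the remaining task is essentially bookkeeping and there is no real obstacle to overcome.
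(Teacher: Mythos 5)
Your proposal is correct and follows exactly the route the paper takes: apply Theorem~\ref{thm:majThm} with $B=0$, $B=D$, and $B=-D$ to obtain (\ref{eq:genHof}), (\ref{eq:genKolo1}), and (\ref{eq:genKolo2}) respectively, and deduce (\ref{eq:genNiki}) from (\ref{eq:genKolo1}) via $\delta_i\ge 2\mu_i$. The paper states this in three sentences without the algebra; your version simply makes the sign-flip bookkeeping and the positivity of the denominators explicit.
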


\begin{remark}
In 2011 Lima, Oliveira, Abreu and Nikiforov \cite{LOAN} proved that 
\begin{equation}\label{eq:LOAN}
\chi \ge 1 + \frac{2m}{2m-n \delta_n}.
\end{equation}
A new proof of this result is provided, based on the method of converting the adjacency matrix into the zero matrix.  Observe that since $\mu_1 \ge 2m/n$, bound (\ref{eq:Kolo2})  follows immediately from this result.  
\end{remark}

\section{Proof of Theorem~\ref{thm:bounds}}
To put the generalized multi-eigenvalue lower bounds on the chromatic number in Theorem~\ref{thm:genBounds} and our proof into context, we outline the proof of Theorem~\ref{thm:bounds}.

Let $V=\{1,\ldots,n\}$.  Let $A\in\C^{n\times n}$ be a Hermitian matrix having zeros on the diagonal.  We say that $A$ can be colored with $c$ colors if there exists a partition of $V$ into disjoint subsets $V_1,\ldots,V_c$ such that for each $s=1,\ldots,c$ $a_{k\ell}=0$ for all $k,\ell\in V_s$.

In the special case when $A$ is the adjacency matrix of a graph, this corresponds to the usual graph coloring.  A graph can be colored with $c$ colors if it is possible to assign at most $c$ different colors to its vertices such that any two adjacent vertices receive different colors. The chromatic number $\chi$ is the minimum number of colors required to color the graph.

In 2007, Nikiforov proved the following result \cite{Nikiforov07}:

\begin{lemma}\label{lem:Niki}
Let $A\in\C^{n\times n}$ be an arbitrary Hermitian matrix that is colorable with $c$ colors.  Then, for any real diagonal matrix 
$B\in\C^{n\times n}$, 
\begin{equation}
\lambda_{\max}(B-A) \ge \lambda_{\max} \left( B + \frac{1}{c-1} A\right)
\end{equation}
\end{lemma}
This result implies immediately several known lower bounds on the chromatic number.  It is convenient to formulate the following corollary to obtain derivations of these bounds.
\begin{corollary}\label{cor}
Let $A\in\C^{n\times n}$ be an arbitrary Hermitian matrix that is colorable with $c$ colors.  Then, for any real diagonal matrix 
$B\in\C^{n\times n}$, 
\begin{equation}
\lambda_{\max}(B-A) \ge \lambda_{\max} ( B + A ) - \frac{c-2}{c-1} \lambda_{\max}(A)
\end{equation}
and consequently
\begin{equation}
c \ge 1 + \frac{\lambda_{\max}(A)}{\lambda_{\max}(A) - \lambda_{\max}(B+A) + \lambda_{\max}(B-A)}.
\end{equation}
\end{corollary}
To obtain the statement of the corollary, consider the statement of Lemma~\ref{lem:Niki} and write the matrix on the right hand side as $B+\frac{1}{c-1}A = B+A - \frac{c-2}{c-1} A$.  It is easy to see that $\lambda_{\max}(X-Y)\ge\lambda_{\max}(X)-\lambda_{\max}(Y)$ holds for arbitrary
Hermitian matrices. In particular, this inequality holds for $X=B+A$ and $Y=\frac{c-2}{c-1} A$, which yields the statement of the corollary.

\begin{proof}
Hoffman's bound (\ref{eq:Hoffman}) is equivalent to the statement of Corollary~\ref{cor} when $A$ is the adjacency matrix and $B$ is the zero matrix.  Kolotilina's bounds (\ref{eq:Kolo1}) and (\ref{eq:Kolo2})
are equivalent to the statement of Corollary~\ref{cor} when $A$ is the adjacency matrix and $B=\pm D$, respectively.
Note that Nikiforov's hybrid bound (\ref{eq:Niki}) follows from 
(\ref{eq:Kolo1}) since $\delta_1 \ge 2\mu_1$.
\end{proof}

\section{Proof of Theorem~\ref{thm:genBounds}}
For an arbitrary Hermitian matrix $X\in\C^{n\times n}$, let $\lambda^\downarrow_1(X),\ldots,\lambda^\downarrow_n(X)$ denote its eigenvalues sorted in non-increasing order.

We use the following result, which is well known in majorization theory \cite{bhatia}.
Let $X_1,\ldots,X_d\in\C^{n\times n}$ be arbitrary Hermitian matrices.  Then
\begin{equation}\label{eq:resOne}
\sum_{i=1}^m \lambda_i^\downarrow(X_1) + \ldots + \sum_{i=1}^m \lambda_i^\downarrow(X_d) \ge 
\sum_{i=1}^m \lambda_i^\downarrow\left( X_1 + \ldots + X_d\right).
\end{equation}
Let $S,T\in\C^{n\times n}$ be two arbitrary Hermitian matrices.  Setting $d=2$, $X_1=S-T$ and $X_2=T$, implies the bound
\begin{equation}\label{eq:resTwo}
\sum_{i=1}^m \lambda_i^\downarrow( S - T) \ge \sum_{i=1}^m \lambda_i^\downarrow(S) - \sum_{i=1}^m \lambda_i^\downarrow(T).
\end{equation}

We are now ready to formulate and prove our multi-eigenvalue generalization of Lemma~\ref{lem:Niki} and Corollary~\ref{cor}.

\begin{lemma}\label{lem:genNiki}
Let $A\in\C^{n\times n}$ be an arbitrary Hermitian matrix (with zeros on the diagonal) that can be colored with $c$ colors.  Then, for any real diagonal matrix 
$B\in\C^{n\times n}$,
\begin{equation}
\sum_{i=1}^m \lambda_i^\downarrow (B-A) \ge \sum_{i=1}^m \lambda_i^\downarrow \left( B + \frac{1}{c-1} A\right) 
\end{equation}
for all $m=1\,\ldots,n$.
\end{lemma}

\begin{proof}
In \cite{WE}, the authors proved the following conversion result: there exist $c-1$ diagonal matrices $U_s$ whose diagonal entries are complex roots of unity such that 
\[
\sum_{s=1}^{c-1}  U_s^\dagger ( - A) U_s  = A.
\]
This conversion result immediately implies 
\begin{equation}\label{eq:matEq}
\sum_{s=1}^{c-1}  U_s^\dagger (B - A) U_s  = (c-1) B + A.
\end{equation}
since $U_s B U_s^\dagger = B U_s U_s^\dagger = B$ holds because the diagonal matrices $U_s$ and $B$ commute and $U_s U_s^\dagger = I$ for all $s$.
The statement
\begin{equation}\label{eq:desiredOne}
\sum_{i=1}^m \lambda_i^\downarrow (B-A) \ge \sum_{i=1}^m \lambda_i^\downarrow \left( B + \frac{1}{c-1} A\right) 
\end{equation}
is obtained by applying the result in (\ref{eq:resOne}) with $X_s = U_s^\dagger(B-A) U_s$ for $s=1,\ldots,c-1$ to the left hand side of (\ref{eq:matEq}) and by dividing everything by $(c-1)$.  This uses that conjugation by a unitary matrix does not change the spectrum of a Hermitian matrix, that is, $\lambda_i^\downarrow (X_s)=\lambda_i^\downarrow (B-A)$ for all $i$ and $s$.
\end{proof}

As noted in the introduction, the above result encompasses the special case
\[
\lambda^\downarrow_1( B - A ) \ge 
\lambda^\downarrow_1\left(
 B + \frac{1}{c-1} A
 \right), 
\]
which was proved by Nikiforov in \cite[Theorem 1]{Nikiforov07} using entirely different techniques. 

\begin{corollary}\label{cor:genCor}
We have
\begin{equation}\label{eq:genNikiLHS}
\sum_{i=1}^m \lambda_i^\downarrow \left( B + \frac{1}{c-1} A\right)
\ge
\sum_{i=1}^m \lambda_i^\downarrow (B+A) - \frac{c-2}{c-1} \sum_{i=1}^m \lambda_i^\downarrow (A)
\end{equation}
and consequently 
\begin{equation}\label{eq:genCorSecondIneq}
c \ge 1 + \frac{\sum_{i=1}^m \lambda_i^{\downarrow}(A)}{\sum_{i=1}^m\lambda_i^{\downarrow}(A) - \sum_{i=1}^m \lambda_i^{\downarrow}(B+A) + \sum_{i=1}^m \lambda_i^{\downarrow}(B-A)}.  
\end{equation}
\end{corollary}

\begin{proof}
The first statement 
is obtained by writing $B + \frac{1}{c-1} A=B+A - \frac{c-2}{c-1}A$ and applying the result in (\ref{eq:resTwo}) with $S=B+A$ and $T=\frac{c-2}{c-1}A$ to the left hand side of (\ref{eq:genNikiLHS}).  The second statement follows from the first one by elementary algebra.
\end{proof}

We are now ready to prove the multi-eigenvalue bounds of Theorem~\ref{thm:genBounds}.

\begin{proof}
The generalized Hoffman bound (\ref{eq:genHof}) is equivalent to the statement of Corollary~\ref{cor:genCor} when $A$ is the adjacency matrix and $B$ is the zero matrix.  The generalized Kolotilina bounds (\ref{eq:genKolo1}) and (\ref{eq:genKolo2})
are equivalent to the statement of eq.~(\ref{eq:genCorSecondIneq}) in Corollary~\ref{cor:genCor} when $A$ is the adjacency matrix and $B=\pm D$, respectively.
Note that the generalized Nikiforov bound (\ref{eq:genNiki}) follows from 
(\ref{eq:genKolo1}) since $\delta_i \ge 2\mu_i$ for all $i$.
\end{proof}

Using the conversion result, we can also give an alternative proof of the Lima, Oliveira, Abreu and Nikiforov bound in (\ref{eq:LOAN}). 

\begin{proof}
The identity $D-Q=-A$, and the invariance of the diagonal entries under conjugation by the diagonal unitary matrices $U_s$ imply
\[
A = \sum_{s=1}^{c-1} U_s(-A)U_s^\dagger = \sum_{s=1}^{c-1} U_s(D-Q)U_s^\dagger = (c-1) D - \sum_{s=1}^{c-1} U_s Q U_s^\dagger.
\] 
Define the column vector $v=\frac{1}{\sqrt{n}}(1,1,\ldots,1)^T$. Multiply the left and right most sides of the above matrix equation by $v^\dagger$ from the left and by $v$ from the right to obtain
\[
\frac{2m}{n} = v^\dagger A v = (c-1) \frac{2m}{n} - \sum_{s=1}^{c-1} v^\dagger U_s Q U_s^\dagger v \le
(c-1) \frac{2m}{n} - (c-1) \delta_n.
\] 
This uses that $v^\dagger A v = v^\dagger D v = 2m/n$, which is equal to the sum of all entries of respectively $A$ and $D$ divided by $n$ due to the special form of $v$, and that $w^\dagger U_s Q U_s^\dagger w \ge \lambda_{\min}(Q)=\delta_n$.  This inequality follows from \cite[Problem I.6.15]{bhatia} since $U_s^\dagger w$ is a unit vector, which is not necessarily an eigenvector of $Q$ corresponding to the eigenvalue $\delta_n$.
\end{proof}

%%
%% Normalization
%%

\section{Unification of bounds}

Let $G$ be a graph with no isolated vertices. Let $D$ denote the diagonal matrix whose entries $d_1,\ldots,d_n$ are the degrees of the vertices.  Chung \cite{Chung97} defined a normalized adjacency matrix of $G$, $\cA = D^{-1/2} A D^{-1/2}$, and similarly a normalized Laplacian matrix $\cL = I -\cA$ and a normalized signless Laplacian matrix $\cQ = I + \cA$. Let $1 = \mu_1^* \ge \mu_2^* \ge \ldots \ge \mu_n^*$ denote the eigenvalues of $\cA$; $\theta_1^* \ge \theta_2^* \ge \ldots \ge \theta_n^* = 0$ denote the eigenvalues of $\cL$; and  $\delta_1^* \ge \delta_2^* \ge \ldots \ge \delta_n^*$ denote the eigenvalues of $\cQ$. Note that $\theta_i^* = 1 - \mu_{n-i+1}^*$ and $\delta_i^* = 1 + \mu_i^*$, for all $i$.

In Corollary~\ref{cor}, consider the three cases: $B = 0$ and $A=\cA$, $B = I$ and $A = \cL$, and $B = -I$ and $A = \cQ$. These lead to normalized versions of the Hoffman and Kolotilina bounds. However, because of the relationships between the eigenvalues of $\cA$, $\cL$, and $\cQ$, all three normalized bounds are equal to the following inequality: 
\begin{equation}\label{eq:norHoffman}
\chi \ge 1 + \frac{1}{-\mu_n^*}
\end{equation}
Bound (\ref{eq:norHoffman}) therefore unifies the Hoffman and Kolotilina bounds and is equivalent to a special case of \cite[Theorem 6.7]{Chung97}.
Bound (\ref{eq:norHoffman}) is exact for all bipartite graphs and for regular graphs for which bound (\ref{eq:Hoffman}) is exact.   It is also exact for some irregular graphs for which bound (4) is not exact, such as Sierpi\'nski and some Windmill graphs.  

In Corollary~\ref{cor:genCor}, consider again the three cases: $B=0$ and $A=\cA$, $B = I$ and $A = \cL$, and $B = -I$ and $A = \cQ$. These lead to normalized versions of bounds (\ref{eq:genHof}), (\ref{eq:genKolo1}) and (\ref{eq:genKolo2}), all of which are equal to:
\begin{equation}\label{eq:norGenHof}
\chi \ge 1 + \frac{\sum_{i=1}^m \mu_i^*}{\sum_{i=1}^m -\mu^*_{n+i-1}}
\end{equation}
for all $m = 1,\ldots, n$.  
										
These normalized bounds are equal to the equivalent un-normalized bounds for regular graphs. However, bounds (\ref{eq:norHoffman}) and (\ref{eq:norGenHof}) perform better than the un-normalized bounds for many named irregular graphs. 

There are graphs for which each of the bounds discussed in this paper
performs the best. For example the NoPerfectMatching Graph on $16$ vertices,
with $\chi = 4$, has bound (\ref{eq:Hoffman}) equal to $2.5$ but bound (\ref{eq:genKolo1}) with $m = 3$ is the
best, equal to $2.9$. Circulant(16,(1,7,8)), with $\chi = 4$, has bound (\ref{eq:Hoffman})
equal to $2.7$ but bound (\ref{eq:norGenHof}) with $m = 3$ is the best, equal to $2.9$.

\section{Conclusions}
This paper generalises an eigenvalue inequality due to Nikiforov. This enables several lower bounds for the chromatic number to be generalised to encompass all eigenvalues of the adjacency, Laplacian and signless Laplacian matrices. A  bound using the normalized adjacency matrix is also derived, which often performs better than any of the un-normalized bounds.

The proof of Theorem~\ref{thm:genBounds} is straightforward because of the power of combining the conversion result with majorization, and because the proof uses graph matrices rather than the eigenvectors of these matrices.

%%
%% Acknowledgements
%%

\section*{Acknowledgements}
We would like to thank Vladimir Nikiforov, in private correspondence, for helpful discussions, leading to the simple derivation of the lower bounds on the chromatic number in Theorem~\ref{thm:bounds}.

P.W. gratefully acknowledges the support from the National Science Foundation CAREER Award CCF-0746600.

\end{document}